\newtheorem{thm}{Theorem}
\newtheorem{Example}{Example}
\numberwithin{defn}{section}
\numberwithin{thm}{section}
\numberwithin{Lemma}{section}
\numberwithin{Corollary}{section}
\numberwithin{Example}{section}
\numberwithin{subsection}{section}
\numberwithin{Remark}{section}
\numberwithin{equation}{section}
\numberwithin{ppn}{section}
\begin{document}
\title[ Third and Fourth order iterative methods ... ]
{ Third and Fourth-order iterative methods for solving systems of nonlinear equations} 
\author{Anuradha Singh and J. P. Jaiswal  }
\date{}
\maketitle


\textbf{Abstract.} The object of the present paper is to extend the third-order iterative method for solving nonlinear equations into systems of nonlinear equations. Since our motive is to develop the method which improve the order of convergence of Newton's method with minimum number of functional evaluations.To achieve this goal ,we have used weighted method. Computational efficiency is compared not only traditional way but also recently introduced  flops-like based concept. Finally numerical results are given to confirm theoretical order of convergence.
\\

\textbf{Mathematics Subject Classification (2000).}
 65H10, 65Y20.\\
\\
\textbf{Keywords and Phrases.} 
Nonlinear system, order of convergence, efficiency index, Jacobian matrix, LU factorization.
														
\section{Introduction}
 Solving the system  $F(x)=0$ of nonlinear equations is a common important problem in science and engineering \cite{Kelley}. A large number of real-world application are reduced to solve a system of nonlinear equations numerically. Solving such systems has become one of the most challenging problems in numerical analysis.
There are many approaches to solve such systems. 
In term of computational point of view when dealing with large-scale system arising from discretization of non linear PDE's \cite{Soleymani} , integral equations [\cite{Ortega}, \cite{Grau}], nonlinear boundary value problems [\cite{Barden}, \cite{Noor}],  solving the system of nonlinear equations by a direct method such as $LU$ decomposition is not easy . To solve such nonlinear systems one of the old method is Newton's method which can be written as follows:
\begin{equation*}
X^{(k+1)}=X^{(k)}-[F'(X^{(k)})]^{-1} F(X^{(k)}),
\end{equation*}
where $ [F'(X^{(k)})]^{-1} $ is the inverse of Frechet derivative  $ [F'(X^{(k)})] $  of the function $ F(X) $.The method converge quadratically, provided initial guess is close to the exact solution. In order to improve the order of convergence of the Newton's method several methods have been proposed in the literature [ \cite{Noor1}, \cite{Babajee}, \cite {Montazeri1}, \cite{Sharma}, \cite{Abad}, \cite{Sharma1}, \cite{Noor}, \cite{Grau}] and the references therein.\\
This paper is organized as follows : In section 2, first we extend the Chun third-order method \cite{Chun}  of single variable to multivariate case. After that to find higher order  (fourth-order)  convergence we use weighted method without using any more functional evaluations. The computation efficiency in more generalized form is compared with some well known recently established methods in section 3 . In section 4, the theoretical result is supported by  numerical examples . Finally we give the concluding remarks.

\section{Development of the method and convergence analysis}

By using circle of curvature concept Chun et. al. in \cite{Chun} constructed a third-order iterative methods defined by

\begin{eqnarray}\label{eqn:21}
y_n&=&x_n-\frac{f(x_n)}{f'(x_n)},\nonumber\\
x_{n+1}&=&x_n-\frac{1}{2}\left[3-\frac{f'(y_n)}{f'(x_n)}\right]\frac{f(x_n)}{f'(x_n)}.
\end{eqnarray}
Here we extend this method to the multivariate case. This method for system of nonlinear equations can be given by
\begin{eqnarray}\label{eqn:22}
Y^{(k)}&=&X^{(k)}-[F'(X^{(k)})]^{-1}F(X^{(k)}),\nonumber\\
X^{(k+1)}&=&X^{(k)}-\frac{1}{2}\left( 3I-[F'(X^{(k)})]^{-1}F'(Y^{(k)})\right) \nonumber\\
&& \times[F'(X^{(k)})]^{-1} F(X^{(k)}),
\end{eqnarray}
where $X^{(i)}=[x_1^{(i)},x_2^{(i)}, . . . ,x_n^{(i)}]^{T}$, $(i=0,1, 2, . . . )$; similarly $Y^{(i)}$; $I$ is $n \times n$ identity matrix; $F(X^{(i)})=[f_1(x_1^{(i)},x_2^{(i)}, . . . , x_n^{(i)}), f_2(x_1^{(i)}, x_2^{(i)}, . . . , x_n^{(i)}), . . . , \\ f_n(x_1^{(i)},x_2^{(i)}, . . . , x_n^{(i)})]$; and $F'(X^{(i)})$ is the Jacobian matrix of $F$ at $X^{(i)}$.

Let $\alpha+H \in \Re^n$ be the any point of the neighborhood of exact solution $\alpha \in \Re^n$ of the nonlinear system $F(X)=0$.  If Jacobian matrix $F'(\alpha)$ is nonsingular then Taylor's series expansion for multivariate case is 
\begin{eqnarray}\label{eqn:23}
F(\alpha+H)=F'(\alpha)\left[H+C_2H^2+C_3H^3+ . . . +C_{p-1}H^{p-1}\right]+O(H^p),
\end{eqnarray}

where $C_i=[F'(\alpha)]^{-1}\frac{F^{(i)}(\alpha)}{i!}$, $i\geq 2$ and 

\begin{eqnarray}\label{eqn:24}
F'(\alpha+H)&=&F'(\alpha)\left[I+2C_2H+3C_3H^2+ . . . +(p-1)C_{p-1}H^{p-2}\right] \nonumber\\
&&+O(H^{p-1}) ,
\end{eqnarray}
where $I$ is the identity matrix. From the above equation we can find
\begin{eqnarray}\label{eqn:25}
[F'(\alpha+H)]^{-1}&=&\left[I+A_1H+A_2H^2+ . . . +A_{p-1}H^{p-2}\right][F'(\alpha)]^{-1} \nonumber\\
&&+O(H^{p-1}),
\end{eqnarray}
where $A_1=-2C_2$, $A_2=4C_2^2-3C_3$, $A_3=-8C_2^3+6C_2C_3+6C_3c_2-4C_4$, ... . Here we denote the error at $k^{th}$ iterate  by  $E^{(k)}$ i.e. $E^{(k)}=X^{(k)}-\alpha$. Now the order of convergence of the method $(\ref{eqn:22})$ is confirm by the following theorem:

\begin{thm}
Let $F: D\subseteq \Re^n\rightarrow \Re^n$ be sufficiently Frechet differentiable in a convex set $D$ containing a root $\alpha$ of $F(X)=0$. Let us suppose that $F'(X)$ is continuous and nonsingular in $D$ and $X^{(0)}$ is close to $\alpha$. Then the sequence $\{X^{(0)}\}_{k\geq 0}$ obtained by the iterative expression $(\ref{eqn:22})$ converges to $\alpha$  with order three.
\end{thm}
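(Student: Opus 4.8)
The plan is a local error analysis resting on the Taylor developments (\ref{eqn:23})--(\ref{eqn:25}). I would write $E=E^{(k)}=X^{(k)}-\alpha$, abbreviate $F'=F'(\alpha)$, and use $C_i=[F'(\alpha)]^{-1}F^{(i)}(\alpha)/i!$ throughout.

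\emph{Step 1: the inner Newton step.} Putting $H=E$ in (\ref{eqn:23}) and (\ref{eqn:25}) I would get
\[
F(X^{(k)})=F'\bigl[E+C_2E^2+C_3E^3+O(E^4)\bigr],\qquad
[F'(X^{(k)})]^{-1}=\bigl[I-2C_2E+(4C_2^2-3C_3)E^2+O(E^3)\bigr](F')^{-1},
\]
and multiplying these the Newton correction becomes $[F'(X^{(k)})]^{-1}F(X^{(k)})=E-C_2E^2+2(C_2^2-C_3)E^3+O(E^4)$; hence the intermediate iterate would satisfy
\[
E_Y:=Y^{(k)}-\alpha=C_2E^2-2(C_2^2-C_3)E^3+O(E^4)=O(E^2).
\]

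\emph{Step 2: the weight matrix.} Next I would insert $H=E_Y$ into (\ref{eqn:24}); since $E_Y=O(E^2)$, its quadratic part is $O(E^4)$, so $F'(Y^{(k)})=F'[I+2C_2E_Y+O(E^4)]=F'[I+2C_2^2E^2+O(E^3)]$. Combining with the inverse from Step 1 I would obtain
\[
[F'(X^{(k)})]^{-1}F'(Y^{(k)})=I-2C_2E+(6C_2^2-3C_3)E^2+O(E^3),
\]
and therefore $\tfrac12\bigl(3I-[F'(X^{(k)})]^{-1}F'(Y^{(k)})\bigr)=I+C_2E-(3C_2^2-\tfrac32C_3)E^2+O(E^3)$.

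\emph{Step 3: assembling the error.} Substituting Steps 1--2 into (\ref{eqn:22}) I would reach
\[
E^{(k+1)}=E-\bigl[I+C_2E-(3C_2^2-\tfrac32C_3)E^2+O(E^3)\bigr]\bigl[E-C_2E^2+2(C_2^2-C_3)E^3+O(E^4)\bigr].
\]
The decisive point is that the linear-in-$E$ part of the product is exactly $E$ and the two quadratic contributions, $-C_2E^2$ coming from the Newton correction and $+C_2E^2$ coming from the factor $C_2E$, cancel; so $E^{(k+1)}=O(E^3)$, which is the claimed third order. Keeping the cubic terms would also exhibit the leading error constant as a fixed combination of $C_2$ and $C_3$, generically nonzero.

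I expect the only real difficulty to be disciplined bookkeeping: the $C_i$ are non-commuting multilinear operators (for instance $C_2E$ denotes the linear map $v\mapsto C_2(E,v)$), so the order of composition must be respected and one has to check which mixed products are genuinely $O(E^3)$ rather than of lower order. The hypotheses of the theorem enter only through the validity of the Neumann-type expansion (\ref{eqn:25}): continuity and nonsingularity of $F'$ near $\alpha$, together with $X^{(0)}$ close enough to $\alpha$, guarantee that all iterates stay in a neighbourhood on which $F'$ is invertible, which is what makes every step above legitimate.
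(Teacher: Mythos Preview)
Your proposal is correct and follows essentially the same route as the paper: both arguments expand $F(X^{(k)})$, $F'(X^{(k)})$ and $[F'(X^{(k)})]^{-1}$ via (\ref{eqn:23})--(\ref{eqn:25}), compute the Newton residual $E_Y=C_2E^2+O(E^3)$, form $[F'(X^{(k)})]^{-1}F'(Y^{(k)})=I-2C_2E+(6C_2^2-3C_3)E^2+O(E^3)$, and substitute into (\ref{eqn:22}) to see the $E$ and $E^2$ terms cancel. Your added remarks on the non-commutativity of the $C_i$ and on why the hypotheses legitimate the Neumann expansion go slightly beyond what the paper makes explicit, but the computational skeleton is identical.
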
 

\begin{proof}
From  $(\ref{eqn:23})$,  $(\ref{eqn:24})$ and  $(\ref{eqn:25})$, we have

\begin{eqnarray}\label{eqn:26}
F(X^{(k)})&=&F'(\alpha)\left[E^{(k)}+C_2{E^{(k)}}^2+C_3{E^{(k)}}^3+C_4{E^{(k)}}^4+C_5{E^{(k)}}^5\right] \nonumber\\
&&+O({E^{(k)}}^6).
\end{eqnarray}
\begin{eqnarray}\label{eqn:27}
F'(X^{(k)})&=&F'(\alpha)\left[I+2C_2{E^{(k)}}+3C_3{E^{(k)}}^2+4C_4{E^{(k)}}^3+5C_5{E^{(k)}}^4\right] \nonumber\\
&&+O({E^{(k)}}^5),
\end{eqnarray}
and
\begin{eqnarray}\label{eqn:28}
&&[F'(X^{(k)})]^{-1}\nonumber\\
&&=\{I-2C_2{E^{(k)}}+(4C_2^2-3C_3){E^{(k)}}^2+(-8C_2^3+6C_2C_3+6C_3C_2 \nonumber\\
&& -4C_4){E^{(k)}}^3 \} [F'(\alpha)]^{-1}+O({E^{(k)}}^4),
\end{eqnarray}
where $C_i=[F'(\alpha)]^{-1}\frac{F^{(i)}(\alpha)}{i!}$, $i=2, 3, . . .$ .
Now from the equations $(\ref{eqn:28})$ and $(\ref{eqn:26})$, we can obtain\\
\begin{eqnarray}\label{eqn:29}
[F'(X^{(k)})]^{-1}F(X^{(k)})=E^{(k)}-C_2{E^{(k)}}^2+(2C_2^2-2C_3){E^{(k)}}^3+O({E^{(k)}}^4).
\end{eqnarray} 
By virtue of  $(\ref{eqn:29})$ the first step of the method $(\ref{eqn:22})$ becomes
\begin{eqnarray}\label{eqn:210}
Y^{(k)}=\alpha+C_2{E^{(k)}}^2+(-2C_2^2+2C_3){E^{(k)}}^3+O({E^{(k)}}^4).
\end{eqnarray} 
Now the Taylor expansion for Jacobian matrix $F'(Y^{(k)})$ can be given as
\begin{eqnarray}\label{eqn:211}
F'(Y^{(k)})=F'(\alpha)[I+2C_2^2{E^{(k)}}^2-(4C_2^4-4C_2C_3){E^{(k)}}^3]+O({E^{(k)}}^4).
\end{eqnarray} 
Therefore
\begin{eqnarray}\label{eqn:212}
&&[F'(X^{(k)})]^{-1}F'(Y^{(k)})\nonumber\\
&&=I-2C_2E^{(k)}+(6C_2^2-3C_3){E^{(k)}}^2+(-4C_2^4-12C_2^3+10C_2C_3\nonumber\\
&&+6C_3C_2-4C_4){E^{(k)}}^3+O({E^{(k)}}^4).
\end{eqnarray}
Finally using $(\ref{eqn:212})$ and $(\ref{eqn:29})$ in the second step of $(\ref{eqn:22})$, we find that the error expression can be expressed as
\begin{eqnarray}\label{eqn:213}
E^{(k+1)}=\left(2C_2^2+\frac{C_3}{3}\right){E^{(k)}}^3+O({E^{(k)}}^4),
\end{eqnarray}
which shows the theorem.
\end{proof}
 Now since our motive is to construct the method which accelerates the order of convergence of Newton's method with minimum number functional evaluations. Keeping in the mind this fact we consider the following iterative method in order to find higher order iterative method without using any additional functional evaluations.
\begin{eqnarray}\label{eqn:214}
Y^{(k)}&=&X^{(k)}-\beta [F'(X^{(k)})]^{-1}F(X^{(k)})],\nonumber\\
X^{(k+1)}&=&X^{(k)}-\frac{1}{2} \left[ 3I-[F'(X^{(k)})]^{-1}F'(Y^{(k)})\right] \nonumber\\
&& . \left[a_1I+a_2[F'(X^{(k)})]^{-1}F'(Y^{(k)})+a_3([F'(X^{(k)})]^{-1}F'(Y^{(k)}))^{2}\right]\nonumber\\
&& . [F'(X^{(k)})]^{-1}F(X^{(k)}) ],
 \end{eqnarray}
where $ \beta, a_1 , a_2,  a_3, $ are real parameters. Now the following theorem indicates for what values of $ \beta, a_1 , a_2,  a_3, $  this method achieve fourth-order convergence:
\begin{thm}
Let $F: D\subseteq \Re^n\rightarrow \Re^n$ be sufficiently Frechet differentiable in a convex set $D$ containing a root $\alpha$ of $F(X)=0$. Let us suppose that $F'(X)$ is continuous and nonsingular in $D$ and $X^{(0)}$ is close to $\alpha$. Then the sequence $\{X^{(0)}\}_{k\geq 0}$ obtained by the iterative expression $(\ref{eqn:214})$ converges to $\alpha$  with order four for $\beta=\frac{2}{3}$, $a_1=\frac{9}{4}$, $a_2=-\frac{9}{4}$ and $a_3=1$.
\end{thm}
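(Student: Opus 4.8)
The plan is to follow the template of the proof of Theorem~2.1: expand every quantity appearing in $(\ref{eqn:214})$ as a series in the error $E^{(k)}=X^{(k)}-\alpha$, substitute the prescribed values of $\beta,a_1,a_2,a_3$, and show that the coefficients of $E^{(k)}$, $(E^{(k)})^2$ and $(E^{(k)})^3$ in the resulting expression for $E^{(k+1)}$ all vanish, so that $E^{(k+1)}=O((E^{(k)})^4)$. Concretely I would start from the expansions $(\ref{eqn:26})$--$(\ref{eqn:28})$ (carried one order further than stated, since we now aim at $O((E^{(k)})^4)$) and record the Newton correction
\[
u^{(k)}:=[F'(X^{(k)})]^{-1}F(X^{(k)})=E^{(k)}-C_2(E^{(k)})^2+(2C_2^2-2C_3)(E^{(k)})^3+O((E^{(k)})^4),
\]
which is exactly $(\ref{eqn:29})$.

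Next I would feed $\beta=\tfrac23$ into the first substep, obtaining $Y^{(k)}-\alpha=(1-\beta)E^{(k)}+\beta C_2(E^{(k)})^2+\cdots=\tfrac13 E^{(k)}+\tfrac23 C_2(E^{(k)})^2+(-\tfrac43C_2^2+\tfrac43C_3)(E^{(k)})^3+O((E^{(k)})^4)$. The essential difference from Theorem~2.1 is that here $Y^{(k)}-\alpha$ has a \emph{nonzero linear part}. Plugging this into the Taylor expansion $(\ref{eqn:24})$ gives $F'(Y^{(k)})$, and multiplying on the left by $(\ref{eqn:25})$ produces the key matrix
\[
M^{(k)}:=[F'(X^{(k)})]^{-1}F'(Y^{(k)})=I+M_1E^{(k)}+M_2(E^{(k)})^2+M_3(E^{(k)})^3+O((E^{(k)})^4),
\]
with $M_1=-\tfrac43C_2$, $M_2=4C_2^2-\tfrac83C_3$ and $M_3$ a concrete (order-sensitive) combination of $C_2,C_3,C_4$; in particular $\delta:=M^{(k)}-I=O(E^{(k)})$.

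I would then expand the bracketed matrix polynomial, using that $I$ commutes with $\delta$:
\[
\tfrac12\bigl(3I-M^{(k)}\bigr)\bigl(a_1I+a_2M^{(k)}+a_3(M^{(k)})^2\bigr)=(a_1+a_2+a_3)I+\tfrac12(a_2+3a_3-a_1)\delta-\tfrac12a_2\delta^2-\tfrac12a_3\delta^3 .
\]
Substituting $a_1=\tfrac94,\ a_2=-\tfrac94,\ a_3=1$ gives $a_1+a_2+a_3=1$ (so the scheme is consistent and agrees with Newton at leading order) and reduces the right-hand side to $I-\tfrac34\delta+\tfrac98\delta^2-\tfrac12\delta^3$. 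Expanding this in $E^{(k)}$ via the series for $\delta$ yields $\tfrac12(3I-M^{(k)})(\cdots)=I+C_2E^{(k)}+(-C_2^2+2C_3)(E^{(k)})^2+(\cdots)(E^{(k)})^3+O((E^{(k)})^4)$. Multiplying this series by $u^{(k)}$ and subtracting from $E^{(k)}$, the $E^{(k)}$ term cancels automatically, the $(E^{(k)})^2$ coefficient equals $(1-\tfrac32\beta)C_2$ and vanishes precisely because $\beta=\tfrac23$, and the $(E^{(k)})^3$ coefficient collapses to $(2C_2^2-2C_3)+C_2(-C_2)+(-C_2^2+2C_3)=0$; hence $E^{(k+1)}=O((E^{(k)})^4)$.

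The main obstacle is bookkeeping rather than conceptual. Because $Y^{(k)}-\alpha$ has a linear term, $M^{(k)}-I$ starts at first order, so every factor in $(\ref{eqn:214})$ contributes to the order-three part of the error, and one must track the noncommuting products of the $C_i$ (for instance $C_2C_3$ versus $C_3C_2$, exactly as in the expression for $A_3$) consistently through all the matrix multiplications. I would also compute the (generically nonzero) $(E^{(k)})^4$ coefficient of $E^{(k+1)}$ to confirm that the order is exactly four.
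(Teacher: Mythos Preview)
Your proposal is correct and follows the same Taylor-expansion strategy as the paper: expand $u^{(k)}=[F'(X^{(k)})]^{-1}F(X^{(k)})$, $Y^{(k)}-\alpha$, and $M^{(k)}=[F'(X^{(k)})]^{-1}F'(Y^{(k)})$ in powers of $E^{(k)}$, assemble the second substep, and verify that the coefficients through order three cancel.

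The only real difference is organisational. The paper keeps $\beta,a_1,a_2,a_3$ symbolic throughout, expands $t$, $t^2$, $3I-t$, $a_1I+a_2t+a_3t^2$, and their products separately (equations $(\ref{eqn:219})$--$(\ref{eqn:220f})$), and only at the very end imposes $1-a_1-a_2-a_3=T_1=T_2=T_3=0$ to \emph{derive} the parameter values. You instead substitute the given values at the outset and, more importantly, rewrite the matrix factor as a cubic in $\delta=M^{(k)}-I$ via the clean identity
\[
\tfrac12(3I-M)(a_1I+a_2M+a_3M^2)=(a_1+a_2+a_3)I+\tfrac12(a_2+3a_3-a_1)\delta-\tfrac12 a_2\delta^2-\tfrac12 a_3\delta^3 .
\]
This buys you a much shorter computation---only the expansions of $\delta$, $\delta^2$, $\delta^3$ to the needed order are required, and the non-commutativity bookkeeping you flag is confined to the $\delta^2$ and $\delta^3$ terms---at the cost of not showing that these are the \emph{unique} parameter values achieving order four, which the paper's arrangement makes more visible. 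Either route proves the theorem as stated.

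One small wrinkle: you fix $\beta=\tfrac23$ when computing $M_1,M_2$ but then write the $(E^{(k)})^2$ coefficient of $E^{(k+1)}$ as $(1-\tfrac32\beta)C_2$ as if $\beta$ were still free. This is harmless (and the formula is correct for general $\beta$ once $a_1+a_2+a_3=1$ and $a_2+3a_3-a_1=-\tfrac32$ are fixed), but in a polished write-up you should be consistent about when the substitution happens.
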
 
\begin{proof}
From $(\ref{eqn:25})$ we have
\begin{eqnarray}\label{eqn:215}
[F'(X^{(k)})]^{-1}&=&\{I-2C_2{E^{(k)}}+(4C_2^2-3C_3){E^{(k)}}^2 \nonumber\\
&&+(-8C_2^3+6C_2C_3+6C_3C_2-4C_4){E^{(k)}}^3\}[F'(\alpha)]^{-1}\nonumber\\
&&+O({E^{(k)}}^4).
\end{eqnarray}
Now multiplying $(\ref{eqn:215})$ to $(\ref{eqn:26})$, we can get
\begin{eqnarray}\label{eqn:216}
s=[F'(X^{(k)})]^{-1}F(X^{(k)})&=&E^{(k)}-C_2{E^{(k)}}^2+(2C_2^2-2C_3){E^{(k)}}^3\nonumber\\
                             &&+(-4C_2^3+4C_2C_3+3C_3C_2-3C_4){E^{(k)}}^4\nonumber\\
                             &&+O({E^{(k)}}^5).
\end{eqnarray} 
Substituting value of $(\ref{eqn:216})$ in the first step of $(\ref{eqn:214})$, we find
\begin{eqnarray}\label{eqn:217}
Y^{(k)}&=&(1-\beta)E^{(k)}+ \beta C_2{E^{(k)}}^2+ \beta(-2C_2^2+2C_3){E^{(k)}}^3\nonumber\\
       &&+ \beta (4C_2^3-4C_2C_3-3C_3C_2+3C_4) {E^{(k)}}^4\nonumber\\
       &&+O({E^{(k)}}^5).
\end{eqnarray}
By using the equation $(\ref{eqn:217})$, the Taylor expansion of Jacobian matrix $F'(Y^{(k)})$ can be written as
\begin{eqnarray}\label{eqn:218}
F'(Y^{(k)})&=&F'(\alpha)\{I+2C_2(1-\beta)E^{(k)}+(2 \beta C_2^2+3C_3(1-\beta)^2){E^{(k)}}^2\nonumber\\
        &&+(\beta(-4C_2^3+4C_2C_3)+6\beta(1-\beta)C_3C_2+4C_4(1- \beta)^3){E^{(k)}}^3\nonumber\\
       &&+(5C_5(-1+\beta)^4+\beta(8C_2^4-12C_3^2(-1+\beta))\nonumber\\
       && +C_2^2C_3(-26+15\beta)+6C_2C_4(3+2(-2+\beta)\beta)){E^{(k)}}^4\}\nonumber\\
       &&+O({E^{(k)}}^5).
\end{eqnarray}
From $(\ref{eqn:215})$ and $(\ref{eqn:218})$, it is obtained that  
\begin{eqnarray}\label{eqn:219}
 t=[F'(X^{(k)})]^{-1}F(Y^{(k)})]&= &I-2\beta C_2 E^{(k)}+\{6\beta C_2^2+3C_3(\beta^2-2\beta)\}{E^{(k)}}^2\nonumber\\
&&+ \{-16 \beta C_2^3+(-6\beta^2+16\beta)C_2C_3+6\beta(2-\beta)\nonumber\\
&& C_3C_2+(4(1-\beta)^3-4)C_4\}{E^{(k)}}^3+O({E^{(k)}}^4).  \nonumber\\
\end{eqnarray}
The above equation implies that 
\begin{eqnarray}\label{eqn:220}
&&t^2=([F'(X^{(k)})]^{-1}F(Y^{(k)}))^2\nonumber\\
&&=I-4 \beta C_2 E^{(k)}+\{(12\beta+4 \beta^2)C_2^2+6(\beta^2-2 \beta)C_3\}{E^{(k)}}^2\nonumber\\
                              &&\{(-32 \beta-24\beta^2)C_2^3+(-6\beta^3+32\beta)C_2C_3+(-6\beta^3+24\beta)C_3C_2 \nonumber\\
&&+2(4(1- \beta)^3-4)\}{E^{(k)}}^3+O({E^{(k)}}^4).
\end{eqnarray}
From  $(\ref{eqn:219})$ , we have
\begin{eqnarray}\label{eqn:220a}
3I-t&=&2I+2\beta C_2 E^{(k)}+{(-6\beta C_2^2-3(\beta^2-2\beta)C_3)}{E^{(k)}}^2\nonumber\\ 
&& \{(16\beta C_2^3+(6\beta^2-16\beta))C_2C_3+(-6\beta(2-\beta))C_3C_2\nonumber\\
&&+(-4(1-\beta)^3+4)C_4)\}{E^{(k)}}^3+O({E^{(k)}}^4).
\end{eqnarray}
Now by virtue of  $(\ref{eqn:219})$ and  $(\ref{eqn:220})$ , it can be written as
\begin{eqnarray}\label{eqn:220b}
&&a_1I+a_2t+a_3t^2 \nonumber\\
&&=(a_1+a_2+a_3)I+(-2\beta a_2-4 \beta a_3)C_2{E^{(k)}} +6( \beta^2-2 \beta )a_3)C_3]{E^{(k)}}^2\nonumber\\
&&[ (6\beta a_2+(12\beta+4\beta^2)a_3)C_2^2+(3(\beta^2-2\beta)a_2+(-16 \beta a_2+(-32 \beta\nonumber\\
&& -24 \beta^2)a_3)  C_2^3+((-6\beta^2+16\beta)a_2+(-6\beta^3+32\beta)a_3)C_2C_3+(6\beta(2-\beta)a_2\nonumber\\
&&+(-6\beta^3+24\beta)a_3)C_3C_2+((4(1-\beta)^3-4)a_2+2(4(1-\beta)^3-4)a_3)\nonumber\\
&&C_4)]{E^{(k)}}^3+O({E^{(k)}}^4).
\end{eqnarray}
Multiplying  $(\ref{eqn:220a})$ to  $(\ref{eqn:220b})$ , we have
\begin{eqnarray}\label{eqn:220c}
(3I-t)(a_1I+a_2t+a_3t^2)&= &2(a_1+a_2+a_3)I+(P_1 C_2){E^{(k)}} \nonumber\\
&&+(P_2 C_2^2+P_3 C_3){E^{(k)}}^2 +(O_1C_2^3+O_2C_2C_3  \nonumber\\                                  
&&+O_3C_3C_2+O_4C_4){E^{(k)}}^3+O({E^{(k)}}^4),
\end{eqnarray}
where
\begin{eqnarray*}
P_1 &=& 2 \beta (a_1+a_2+a_3)+2(-2\beta a_2 -4 \beta a_3) ,\nonumber\\
 P_2&=&12 \beta a_2+(24 \beta +8 \beta^2)a_3+2 \beta(-2 \beta a_2-4 \beta a_3-6 \beta(a_1+a_2+a_3)) ,\nonumber\\
 P_3&=&6(\beta^2-2\beta)a_2+12(\beta^2-2\beta)a_3-3( \beta^2 -2 \beta)(a_1+a_2+a_3),\nonumber\\
O_1&=& [-32 \beta a_2+2(-32\beta-24\beta^2)a_3+2\beta(6\beta a_2+(12 \beta+4\beta^2)a_3) \nonumber\\
&&+(-6\beta(-2\beta a_2-4\beta a_3)+16 \beta(a_1+a_2+a_3))]C_2^3, \nonumber\\
O_2&=& [2(-6 \beta^2 +16 \beta)a_2+2(-6\beta^3+32 \beta)a_3+2\beta(3(\beta^2-2\beta)a_2\nonumber\\
&& +6(\beta^2-2\beta)a_3)+(a_1+a_2+a_3)(6\beta^2-16\beta)]C_2C_3, \nonumber\\
O_3&=& [12 \beta(2-\beta)a_2+2(-6 \beta^2+24 \beta)a_3+(-3)(\beta^2-2\beta)\nonumber\\
&&(-2\beta  a_2-4\beta a_3) +(a_1+a_2+a_3)(-6 \beta (2-\beta))] C_3C_2, \nonumber\\
O_4&=&[ 2(4(1-\beta)^3-4)a_2+4(4(1-\beta)^3-4)a_3+(a_1+a_2+a_3)  \nonumber\\
&&(-4(1-\beta)^3+4)]C_4.
\end{eqnarray*} 
Again by multiplying $(\ref{eqn:220b})$ and $(\ref{eqn:216})$ , it is obtained that
\begin{eqnarray}\label{eqn:220f}
&&(3I-t)(a_1 I+a_2t+a_3t^2)s = \nonumber\\
&& 2(a_1+a_2+a_3){E^{(k)}}+(T_1C_2){E^{(k)}}^2 +(T_2C_2^2+T_3C_3){E^{(k)}}^3\nonumber\\
&&+( T_4C_2^3+T5C_2 C_3+T_6 C_3C_2+T_7C_4) {E^{(k)}}^4+O({E^{(k)}}^5),
\end{eqnarray}
where
\begin{eqnarray*}
T_1&=& P_1-2(a_1+a_2+a_3), \nonumber\\
T_2&=& P_2-P_1+4(a_1+a_2+a_3), \nonumber\\
T_3&=& P_3-4(a_1+a_2+a_3), \nonumber\\
T_4&= & O_1 -8(a_1+a_2+a_3)+2 P_1-P_2, \nonumber\\
T_5&=& O_2+8(a_1+a_2+a_3)-2 P_1, \nonumber\\
T_6&=& O_3+6(a_1+a_2+a_3)-P_3, \nonumber\\
T_7&=& O_4-6(a_1+a_2+a_3).
\end{eqnarray*}

Finally using $(\ref{eqn:220f})$, 
in the second step of method $(\ref{eqn:214})$ we get
\begin{eqnarray}\label{eqn:221}
E^{(k+1)}&&=(1-a_1-a_2-a_3)E^{(k)}- \frac{T_1}{2}{E^{(k)}}^2- \frac{1}{2}(T_2 C_2^2+T_3C_3){E^{(k)}}^3\nonumber\\
 && -\frac{1}{2}(T_4C_2^3+T_5C_2C_3+T_6C_3C_2+T_7C_4) {E^{(k)}}^4 +O({E^{(k)}}^5).
\end{eqnarray}
In order to achieve fourth-order convergence $1-a_1-a_2-a_3$, $T_1$, $T_2$ and $T_3$ must be zero. It can be easily shown that they are zero when $\beta=\frac{2}{3}$, $a_1=\frac{9}{4}$, $a_2=-\frac{9}{4}$ and $a_3=1$. 
This completes the proof .

\end{proof}

                              
\section{Efficiency Index}
The classical efficiency index of an iterative method is given by $E=p^{1/C}$, where $p$ is the order of convergence and C is the total computational cost per iteration in terms of number of functional evaluations. So here first we think about the number of functional evaluations to obtain the classical efficiency index. The computational cost for scalar function $F(X)$ is $n$ (number of scalar function evaluations) and computational cost for Jacobian $F'(X)$ is    $n^2$(number of scalar function evaluations).


Very recently it is pointed out in \cite{Montazeri1}, for the systems of nonlinear equations number of evaluations of scalar function is not only effecting factor for evaluating efficiency index. The number of scalar products, matrix products, decomposition LU of the first derivative and the resolution of the triangular linear systems are also of great importance in calculating the real efficiency index of such methods. Here we count the number of matrix quotients, products, summations and subtraction along with the cost of solving two triangular systems, that is based on flops (the real cost of solving systems of linear equations). In this case it is noted that, the flops obtaining LU decomposition is $2n^3/3$, and for solving two triangular system $2n^2$. If the right hand side is a matrix, then the cost (flops)  of the two triangular system is $2n^3$ or roughly $n^3$ as considered here. We have provided the comparison of the traditional and flops-likel efficiency indices for  (fourth-order) method (SH4) $(22)$ of \cite{Sharma}, (fourth-order) method (MN4) $(1.6)$ of \cite{Montazeri1} with our (third-order) method $(\ref{eqn:22})$ (M3) and  (fourth-order) method $(\ref{eqn:214})$ (M4) by the number of functional evaluations in the table 1 and by the graph in the figure 1 to figure 6.
 From figure 1 to figure 6 the colors  brown, blue, green, red  stands for the methods M3, SH4 ,  MN4 and M4. It is clear from figure 1 to figure 6 that traditional efficiency indices of fourth-order methods SH4, MN4 and M4 are same but flops-like efficiency index of our method M4 dominates other methods.

\begin{table}[htb]
\caption{Comparison of efficiency indices for different methods}
  \begin{tabular}{lllll} \hline
Iterative Methods                     &SH4         &MN4            &M3             &M4   \\ \hline 
Order of convergence                          &4           &4              &3              &4 \\    
No. of functional                       &$n+2n^2$       &$n+2n^2$        &$n+2n^2$       &$n+2n^2$ \\
evaluations   \\
Classical efficiency            &$4^{1/n+2n^2}$ &$4^{1/n+2n^2}$  &$3^{1/n+2n^2}$ &$4^{1/n+2n^2}$ \\
index                                                                                                                             \\
Number of LU                    &2            &2              &1              &1 \\
factorizations                                                                                     \\
Cost of LU factorizations                 &$4n^3/3$     &$4n^3/3$       &$2n^3/3$       &$2n^3/3$ \\
(Based on flops)                                                                                                                         \\   
Cost of linear systems         &$\frac{10n^3}{3}+2n^2$  &$\frac{7n^3}{3}+2n^2$  &$\frac{5n^3}{3}+2n^2$       &$\frac{5n^3}{3}+2n^2$ \\
(Based on flops)                                                                                                                                             \\   
Flops-like efficiency     &$4^{\frac {1}{( \frac{10n^3}{3}+4n^2+n)}} $ &$4^{\frac{1} {(\frac{7n^3}{3}+4n^2+n)}}$       &$3^{\frac{1} {(\frac{5n^3}{3}+4n^2+n)}}$       &$4^{\frac{1} {(\frac{5n^3}{3}+4n^2+n)}}$\\
index                                                                                                                                                      \\                                                    
\hline
  \end{tabular}
  \label{tab:abbr}
\end{table}

\newpage
\begin{figure}[h!]
\centering
\includegraphics[width=100mm]{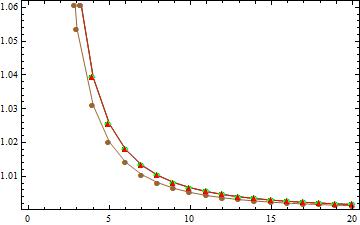}
\caption{The Plot of the (traditional) efficiency indices for different methods ( for $ n=2,3,. . .20) $ }
\label{fig:method}
\end{figure}
\begin{figure}[h!]
\centering
\includegraphics[width=100mm]{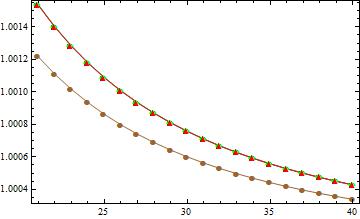}
\caption{The Plot of the (traditional) efficiency indices for different methods ( for $ n=21,22,. . .40) $ }
\label{fig:method}
\end{figure}
\begin{figure}[h!]
\centering
\includegraphics[width=100mm]{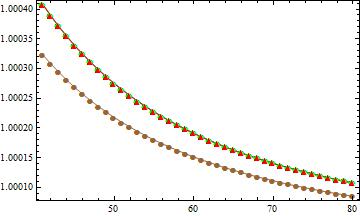}
\caption{The Plot of the (traditional) efficiency indices for different methods (for $ n=41,42,. . .80) $ }
\label{fig:method}
\end{figure}
\newpage
\begin{figure}[h!]
\centering
\includegraphics[width=100mm]{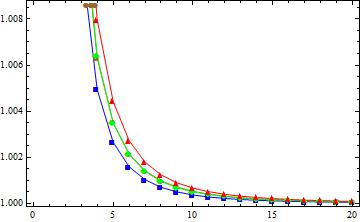}
\caption{The Plot of the flops- like efficiency indices for different methods ( for $ n=2,3,. . .20) $}
\label{fig:method}
\end{figure}
\begin{figure}[h!]
\centering
\includegraphics[width=100mm]{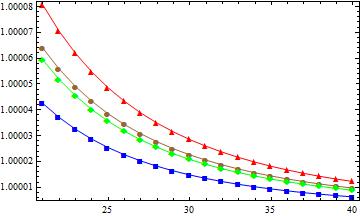}
\caption{The Plot of the flops- like efficiency indices for different methods (for $ n=21,22,. . .40) $}
\label{fig:method}
\end{figure}
\begin{figure}[h!]
\centering
\includegraphics[width=100mm]{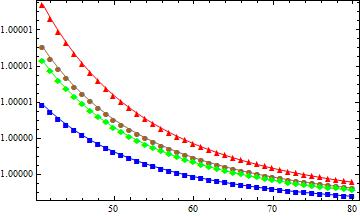}
\caption{The Plot of the flops- like efficiency indices for different methods (for $ n=41,42,. . .80) $}
\label{fig:method}
\end{figure}

 \newpage   
  
\section{Numerical Tests}
In this section, we consider our third-order method $(\ref{eqn:22})$ (M3), the fourth-order method (SH4) $(22)$ of \cite{Sharma}, the fourth-order method (MN4) $(1.6)$ of \cite{Montazeri1} and  our fourth-order method $(\ref{eqn:214})$ (M4) to compare the numerical results obtained form these methods in solving test nonlinear systems. The residual norm for first, second and third iteration is mentioned in table 2. All the computations have been done in MATHEMATICA 8. For different numerical examples the accuracy of the solutions are calculated correct up to 150 digits by using SetAccuracy command. We have used the  stopping criteria $||F(X^{(k)})|| < 1.e-150 $. We consider the following test problems:

\begin{Example}
\begin{eqnarray*}
x_1^2-x_2-19 = 0,  \nonumber \\
 -x_1^2+ \frac{x_2^3}{6}+x_2-17 = 0.
\end{eqnarray*}
\end{Example}
with initial guess $X^{(0)}=(5.1,6.1)^T$ and one of its solution is $ \alpha =(5,6)^T $. The Jacobian matrix of the above system of equations is given by: 
\[
\begin{bmatrix}
2x_1 & -1&\\
-2x_1  & 1+ \frac{x_2^2} {2} & 
\end{bmatrix}
\]

\begin{Example}
\begin{eqnarray*}
-Sin(x_1)+Cos(x_2) = 0 , \nonumber\\
 - \frac{1}{x_2}+(x_3)^{x_1} = 0, \nonumber\\
e^{x_1}-(x_3)^2=0.
\end{eqnarray*}
\end{Example}
with initial guess $X^{(0)}=(1, 0.5, 1.5)^T$ and one of its solution is $ \alpha =(0.9095...,0.6612...,1.5758...)^T $. The Jacobian matrix of the above system of equations is given by: 
\[
\begin{bmatrix}
-Cos(x_1) & -Sin(x_2)& 0 & \\
x_3^{x_1}log(x_3)& \frac{1}{x_2^2}   & x_1x_3^{-1+x_1}& \\
e^{x_1} & 0    & -2x_3 &   \\
\end{bmatrix}
\]

\begin{Example}
\begin{eqnarray*}
 x_2 x_3 +x_4 (x_2+x_3) &=& 0 \nonumber\\
x_1 x_3 + x_4 (x_1+x_3) &=& 0 \nonumber\\
x_1 x_2 + x_4 (x_1 + x_2) &=& 0 \nonumber\\
 x_1 x_2 + x_1 x_3+ x_2 x_3 &=& 1 .\nonumber
\end{eqnarray*}
\end{Example}
with initial guess $X^{(0)}=(0.5,0.5,0.5,-0.2)^T$
and one of its solution is  $ \alpha \approx $  $(0.577350,0.577350,0.577350,-0.288675)^T$ .
The Jacobian matrix of the above system of equations is given by: 
\[
\begin{bmatrix}
0 & x_3 +x_4 & x_2+ x_4 & x_2+x_3 \\
x_3+x_4   & 0 & x_1+x_4 & x_1+x_3 \\
x_2+x_4 &x_1+x_4    &0  &x_1+x_2 \\
x_2+x_3 & x_1+x_3 & x_1+x_2 & 0
\end{bmatrix}
\]
%
\newpage

\begin{Example}
\begin{eqnarray*}
-e^{x_1}+ tan^{-1} (x_2) +2 &=& 0 \nonumber\\
tan^{-1}(x_1^{2}+x_2^{2}-5) &=&0 .\nonumber
%
\end{eqnarray*}
\end{Example}
with initial guess  $ X^{(0)} =(1.0,2.0)^T$ and one of its solution is 
$ \alpha $ =$(1.12906503...1.930080863...)^T$.The Jacobian matrix of the above system of equations is given by:
\[
\begin{bmatrix}
-e^{x_1} & \frac{1}{1+x_2^2} \\ 
\frac{2x_1}{1+(5-(x_1)^2-(x_2)^2 )^2} & \frac{2x_2}{1+( 5-(x_1)^2-(x_2)^2 )^2} 
\end{bmatrix}
\]

%

\begin{Example}
\begin{eqnarray*}
-e^{-x_1}+x_2+x_3=0, \nonumber\\
-e^{-x_2}+x_1+x_3=0, \nonumber\\
-e^{-x_3}+x_1+x_2=0 .
\end{eqnarray*}
\end{Example}
with initial guess $X^{(0)}=(-0.8,1.1,1.1)^T$ and one of its solution is $ \alpha =(-0.8320...,1.1489,...,1.1489...)^T$. The Jacobian matrix of the above system of equations is given by:
\[
\begin{bmatrix}
e^{-x_1}& 1 & 1 & \\
1   & e^{-x_2} & 1&  \\
1 & 1    & e^{-x_3}  & \\
\end{bmatrix}
\]
%

\begin{Example}
\begin{eqnarray*}
x_1^2+x_2^2+x_3^2-9=0, \nonumber\\ 
x_1 x_2 x_3-1=0,  \nonumber \\
x_1+x_2-x_3^2=0 .
\end{eqnarray*}
\end{Example}
with initial guess $X^{(0)}=(3,1,2)^T$ and one of its solution is $ \alpha =(2.2242...,0.22838...,1.5837...)^T$. The Jacobian matrix of above equations is given by : 
\[
\begin{bmatrix}
2x_1 & 2x_2 & 2x_3 &  \\
x_2x_3   & x_1x_3 & x_1x_2 &  \\
1 & 1   & -2x_3 & \\
\end{bmatrix}
\]
%

\begin{Example}
\begin{eqnarray*}
log(x_2)-x_1^2+x_1 x_2=0, \nonumber\\
 log(x_1)-x_2^2+x_1 x_2=0. 
\end{eqnarray*}
\end{Example}
with initial guess $X^{(0)}=(0.5,1.5)^T$ and one of its solution is $ \alpha = (1,1)^T$. The Jacobian matrix of the above system of equations is given by :
\[
\begin{bmatrix}
-2x_1+x_2 & x_1+ \frac{1} {x_2} & \\
\frac {1} {x_1} +x_2 & x_1- 2x_2 & \\
\end{bmatrix}
\]
%
\begin{Example}
\begin{eqnarray*}
 x_i x_{i+1} -1 &=& 0  , \space \space  i= 1, 2, ...n-1 \nonumber\\
x_n x_1 -1 & = & 0 .\nonumber
\end{eqnarray*}
\end{Example}
with initial guess  $ X^{(0)} $=Table[2.0, \{i, 1, 99\}] and one of its solution is  $ \alpha   \approx  (1,1,...,1)^T $ for odd $n$. The Jacobian matrix of above system of equations is given by: 
\[
\begin{bmatrix}
x_2 &  x_1 & 0&0&...&0\\ 
0 & x_3&x_2&0&...&0 \\
0&...&0&0&x_{n-2}&x_{n-1}\\
x_n & 0 & ...& 0&0& x_1
\end{bmatrix}
\]
\begin{table}[htb]
\caption{Norm of the functions of different methods for first, second, third iteration and their flops-like efficiency indices.}
\tiny
 \begin{tabular}{|lcllll|lcl} \hline
Example & Method & $||F(x^{(1)})|| $ & $||F(x^{(2)})|| $    & $||F(x^{(3)})|| $           &E   \\ \hline
              &M3          &  8.3210e-4          &  1.9191e-13           &1.4565e-42                            &1.0357\\                                                                                    
4.1        &MN4        &7.2004e-6	       &5.2511e-27		&7.4763e-112	                     &1.0385\\						   &SH4         & 1.2923e-5          &9.2420e-26              &1.2710e-106                          &1.0315    \\	
             &M4             &2.2420e-5        & 1.4101e-24              &1.1905e-101                         &1.0452\\ \hline                                                   
            &M3               &  4.3578e-2     &  8.2464e-4                &7.4080e-9                 &1.0132\\                                                                                    
4.2           &MN4	        &	 1.1075e-2		   &1.1610e-7	&8.8842e-28                   	 &1.0137\\					&	SH4            & 1.5676e-2                 &1.1309e-6                                &2.4814e-23                &1.0108              \\	
&M4             &2.2105e-2                  & 8.8082e-6                           &1.9345e-19                                   &1.0166\\ \hline                                                     
&M3               &  5.3269e-3                 & 1.8023e-8                              &1.4083e-25              					& 1.0063\\                                                                                    
4.3&MN4	        &	 2.9921e-4		&9.1289e-17				&2.2390e-68           &1.0064	\\									
&SH4            & 5.3618e-4               &1.4537e-15                               &2.1746e-63              &1.0049               \\	
&M4             &9.3630e-4                  & 2.1533e-14                            &1.6492e-58                &1.0080\\ \hline                                                                
&M3               &  5.3521e-3                & 6.0006e-9                                &1.3577e-26             					&1.0357\\                                                                                    
4.4&MN4	        &	 1.5256e-4		 &4.0018e-18			 &3.2145e-72          	&1.0385\\								
&SH4            & 2.9895e-4               &6.5567e-17                               & 1.8332e-67                      &1.0315      \\	
&M4             &5.4871e-4                  & 9.5725e-16                           &7.0796e-63                              &1.0452\\ \hline
&M3               &  6.9918e-5                & 1.9702e-12                                &3.7793e-35             					&1.0132\\                                                                                    
4.5&MN4	        &	 9.6743e-7		 &1.8890e-24			 &4.4506e-95          	 &1.0137\\								
&SH4            & 2.1907e-6               &8.6294e-23                               & 3.6734e-87                     &1.0108       \\	
&M4             &4.2463e-6                  & 2.0792e-21                           &2.2189e-82                        &1.0166\\ \hline                                                                                                        &M3               &  4.3715e-1                & 6.3448e-4                              &2.5770e-12             					&1.0132\\                                                                                    
4.6&MN4	        &	 8.1961e-2		 &1.6321e-8			           &3.3334e-35         &1.0137 	\\								
&SH4            & 1.1046e-1               &9.6577e-8                               & 6.9429e-32               &1.0108             \\	
&M4             &1.5514e-1                  & 6.2793e-7                           &2.0478e-28                       &1.0166\\ \hline                                                         
&M3               &  4.0112e-1                & 2.3024e-2                                &6.3786e-5             					&1.0357\\                                                                                    
4.7&MN4	        &	 1.1392e-1		 &2.4875e-6			            &5.7474e-25          	 &1.0385\\								
&SH4            & 1.0359e-1               &5.4166e-6                               & 4.6302e-23                       &1.0315     \\	
&M4             &9.6796e-2                  & 9.1246e-6                           &8.2632e-22                               &1.0452\\ \hline                                                 
&M3               &  2.2955e+0                 & 1.0320e-2                              &1.3851e-9              					&1.0000 \\                                                                                    
4.8&MN4     & 6.3158 e-1		 &3.0004e-6			&2.4026e-27        &1.0000	\\						&SH4            & 7.9251 e-1              &  1.2152e-5                             &8.0715e-25              &1.0000              \\	
&M4             &1.0361e+0                  & 5.3913e-5                            &5.1123e-22               &1.0000 \\ \hline                                                                 
\end{tabular}
 \label{tab:abbr}
\end{table}

 \newpage 

\section{Conclusion}
The efficiency of quadratically multidimensional method is not satisfactory in most of the practical problem. So in this paper, first we extend third-order method of single variable to multivariate case. Since our aim is to construct the method of higher order convergence with minimum number of functional evaluations. So we have used weight concept in the same third-order method to achieve fourth-order convergence without using any  more functional evaluations. A generalized efficiency index has been discussed here. Its sense is that efficiency index is not depends on only the number of functional evaluations but also on the number of operations per iterations. We have given the comparison of efficiencies based on flops and functional evaluations.  Here it is shown that all the the traditional efficiency index of fourth-order methods are same but flops-like efficiency index of our method dominates the other methods.   The numerical results have been given to confirm validity of theoretical results. The analysis of efficiency is also connected with numerical examples.

\textsc{Anuradha Singh\\
Department of Mathematics, \\
Maulana Azad National Institute of Technology,\\
Bhopal, M.P., India-462051}.\\
E-mail: {singh.anu3366@gmail.com; singhanuradha87@gmail.com}.\\\\
\textsc{Jai Prakash Jaiswal\\
Department of Mathematics, \\
Maulana Azad National Institute of Technology,\\
Bhopal, M.P., India-462051}.\\
E-mail: {asstprofjpmanit@gmail.com; jaiprakashjaiswal@manit.ac.in}.\\\\

\begin{thebibliography}{10}
\bibitem{Chun}
C. Chun and Y. I. Kim: Several new third-order iterative methods for solving nonlinear equations, Acta Appl. Math. 109 (2010), 1053-1063.
\bibitem{Kelley}
C. T. Kelley: Solving nonlinear equations with Newton's method, SIAM ,Philadelphia, (2003).
\bibitem{Babajee}
D. K.R. Babajee, A. Cordero, F. Soleymani and J.R. Torregrasa : On a novel fourth-order algorithm for solving system of nonlinear equations, Journal of Applied maths, Vol 2012, Article Id 165452, 12 Pages.
\bibitem{Soleymani}
F. Soleymani, Taher Lotfi and P. Bakhtiari : A multi- step class of iterative methods for nonlinear systems, Optim Lett, DOI 10. 1007/s 11590-013-0617-6.
\bibitem{Ortega}
J. M. Ortega and W. C. Rheinbaldt : Iterative solution of nonlinear equations  in several variables, Academic Press , New York , (1970).
\bibitem{Sharma} 
J. R. Sharma and H. Arora : On efficient weighted Newton methods for solving system of nonlinear equations, Applied Mathematics and Computations, 222 (2013), 497-506.
\bibitem{Sharma1} 
J. R. Sharma , R. K. Guha and R. Sharma : An efficient fourth order weighted Newton method for systems of nonlinear equations, Num. Algor.(2013), 62 : 307-323.
\bibitem{Noor}
M. A. Noor, M. Waseem , K. I. Noor and et  al : Variational iteration technique for solving a system of nonlinear equations, optim. Lett. (2013)7: 991-1007.
\bibitem{Noor1}
M. A. Noor and M. Waseem: Some iterative method for solving a system of nonlinear equations, Computer and Mathematics with Application, S7(2009), 101-106.
\bibitem{Abad}
M. F. Abad, A. Cordero and J. R. Torregrasa : Fourth and fifth order methods for solving nonlinear system of equations : An application to the global Positioning system , Abstract and Applied Analysis, Volume 2013, Article ID. 586708, 10 Pages.
\bibitem{Grau}
M. Grau-Sanchez, A. Grau and M. Noguera : Ostrowski type methods for solving systems of nonlinear equations , Applied mathematics and Computations 218 (2011), 2377-2385.
\bibitem{Montazeri1}
M. Montazeri, F. Soleymani, S. Shateyi and S. S. Mosta : On a new methods for computing the numerical solutions of systems of nonlinear equations, Journal of Applied Math  , Vol. 2012, Article ID 751975, 15 pages.
\bibitem{Barden}
R. L. Barden and J. D. Faires: Numerical Analysis, PWS Publishing Company, Bostan (2001).





\end{thebibliography}
\end{document}